\newtheorem{thr}{Theorem}[section]
\newtheorem{lem}[thr]{Lemma}
\newtheorem{cor}[thr]{Corollary}
\newtheorem{obs}[thr]{Observation}
\theoremstyle{definition}
\numberwithin{equation}{section}
\def\lk{\operatorname{left\,ker}}
\def\li{\operatorname{left\,im}}
\def\ri{\operatorname{right\,im}}
\journal{}
\begin{document}

\begin{frontmatter}

\title{Which semifields are exact?}

\author{Yaroslav Shitov}

\ead{yaroslav-shitov@yandex.ru}

\address{National Research University Higher School of Economics, 20 Myasnitskaya Ulitsa, Moscow 101000, Russia}

\begin{abstract}
Every (left) linear function on a subspace of a finite-dimensional vector space over a (skew) field can be extended to a (left) linear function on the whole space. This paper explores the extent to what this basic fact of linear algebra is applicable to more general structures. Semifields with a similar property imposed on linear functions are called (left) exact, and we present a complete description of such semifields. Namely, we show that a semifield $S$ is left exact if and only if $S$ is either a skew field or an idempotent semiring. 
\end{abstract}

\begin{keyword}
exact semiring \sep idempotent semiring \sep semifield

\MSC[2010] 16Y60 \sep 15A80

\end{keyword}

\end{frontmatter}

\section{Introduction}

A set $S$ equipped with two binary operations $+$ and $\cdot$ is called a \textit{semiring} if the following conditions are satisfied: (i) $(R,+)$ is a commutative monoid, (ii) $(R,\cdot)$ is a monoid, (iii) multiplication distributes over addition from both sides, and (iv) the additive identity $0$ satisfies $0x=x0=0$, for any $x\in R$. In other words, semirings differ from rings by the fact that their elements are not required to have additive inverses. We denote the multiplicative identity by $1$, and we assume that $0\neq 1$. The set $S^n$ becomes a \textit{free left semimodule} if we define the operations $(s_1,\ldots,s_n)\to (\lambda s_1,\ldots,\lambda s_n)$ for all $\lambda\in S$.

A considerable amount of recent work~\cite{Shi,Wil,WJK} is devoted to the concept of so-called \textit{exactness}, which gives a characterization of semirings that behave nicely with respect to basic linear algebraic properties. Namely, a semiring $S$ is called \textit{left exact} if, for every finitely generated left semimodule $L\subseteq S^n$ and every left $S$-linear function $\varphi: L\to S$, there is a left $S$-linear function $\varphi_0: S^n\to S$ that coincides with $\varphi$ on $L$. This property becomes a standard result of linear algebra if $S$ is a division ring, so we can conclude that division rings are left exact. The concept of right exactness can be defined dually, and the semirings that are both left and right exact are called simply \textit{exact}. Therefore, the division rings are the first examples of exact semirings. Let us also point out that, in the case of rings, the exactness is equivalent to the property known as \textit{FP-injectivity}, see~\cite{Gar, JN, Wil}.

In this paper, we continue studying the semirings in which all the non-zero elements have multiplicative inverses. Such objects form an important class of semirings and are known as \textit{semifields}. Various examples of semifields arise in different applications, and they include the division rings, the semiring of nonnegative reals~\cite{Yan}, the tropical semiring~\cite{SpSt}, the binary Boolean algebra~\cite{RS}, and many others. The aim of our paper is to give a complete characterization of those semifields that are exact.

\begin{thr}\label{thrthr}
Let $S$ be a semifield. Then $S$ is left exact if and only if

\noindent (1) $S$ is a division ring, or

\noindent (2) we have $1+1=1$ in $S$.
\end{thr}

By symmetry, the conclusion of the theorem holds for right exactness as well. In particular, we get that a semifield is left exact if and only if it is right exact. As a corollary of Theorem~\ref{thrthr}, we get the previously known fact that the tropical semiring $\mathbb{T}=(\mathbb{R}\cup\{\infty\},\min,+)$ is exact. As far as I can see, the exactness of $\mathbb{T}$ follows from Theorem~5.3 of~\cite{LMS}, and I would like to thank the reviewer for pointing my attention to that paper. Corollary~40 in~\cite{CSQ} contains a generalization of this result to the class of complete idempotent reflexive semirings. The subsequent paper~\cite{WJK} contains the exactness proofs for other related semirings, including $\overline{\mathbb{T}}=(\mathbb{R}\cup\{+\infty,-\infty\},\min,+)$. Another proof that $\mathbb{T}$ is exact is contained in the paper~\cite{JN}, where the authors do also prove that an additively cancellative semifield is exact if and only if it is a field.

Our paper is structured as follows. In Section~2, we obtain a useful characterization of exactness resembling some of the results in~\cite{WJK}. We use this characterization (Theorem~\ref{lemlem1}) to prove the 'only if' part of Theorem~\ref{thrthr}. In Section 3, we get an improved version of Theorem~\ref{lemlem1} which is valid for semifields. In Section~4, we employ the developed technique and complete the proof of Theorem~\ref{thrthr}. In Section~5, we discuss the perspectives of further work and point out several intriguing open questions.

\section{Another characterization of exactness}

Let us begin with some notational conventions. We will denote matrices and vectors over a semiring $S$ by bold letters. We denote by $\mathbf{A}_i$ and $\mathbf{A}^j$ the $i$th row and $j$th column of a matrix $\mathbf{A}$, and by $\mathbf{A}_i^j$ the entry at the intersection of the $i$th row and $j$th column. By $\mathbf{E}$ we denote the unit matrices, that is, square matrices with ones on the diagonal and zeros everywhere else. In particular, $\mathbf{E}_i$, $\mathbf{E}^i$ stand for the $i$th unit row and column vectors, respectively. Every $d\times n$ matrix induces the function $S^{1\times d}\to S^{1\times n}$ defined as $\mathbf{u}\to \mathbf{u}\mathbf{A}$. We denote the image of this operator by $\operatorname{left\,im} \mathbf{A}$, and the kernel of this operator as $\operatorname{left\,ker} \mathbf{A}$. In other words, $\operatorname{left\,ker} \mathbf{A}$ is the set of all pairs $(\mathbf{u},\mathbf{v})\in S^{1\times d}\times S^{1\times d}$ such that $\mathbf{u}\mathbf{A}=\mathbf{v}\mathbf{A}$. The right image and right kernel of $\mathbf{A}$ are defined dually in a natural way. In particular, we define $\ri \mathbf{A}$ as the set of all vectors in $S^{d\times 1}$ that can be written as $\mathbf{A}\mathbf{w}$ with some $\mathbf{w}\in S^{n\times1}$. We proceed with a characterization of exactness that looks very similar to Theorem~3.2 in~\cite{WJK} and to Lemma~3.3 in~\cite{JN}. Therefore, the following result cannot be called 'new', and we provide the proof just for the sake of completeness.

\begin{thr}\label{lemlem1}
Let $S$ be a semiring. The following are equivalent:

\noindent (E1) $S$ is left exact;

\noindent (E2) for any $\mathbf{A}\in S^{d\times n}$, $\mathbf{b}\in S^{d\times 1}$, the condition $\operatorname{left\,ker}\mathbf{A}\subseteq \operatorname{left\,ker}\mathbf{b}$ implies $\mathbf{b}\in\operatorname{right\,im}\mathbf{A}$.
\end{thr}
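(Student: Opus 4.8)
The plan is to prove the two implications separately, translating the definition of left exactness into the matrix language of (E2) and back.

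For the direction (E1) $\Rightarrow$ (E2), I would start with $\mathbf{A}\in S^{d\times n}$ and $\mathbf{b}\in S^{d\times 1}$ satisfying $\lk\mathbf{A}\subseteq\lk\mathbf{b}$. The idea is to take $L=\li\mathbf{A}\subseteq S^{1\times n}$, which is a finitely generated left semimodule (generated by the rows $\mathbf{A}_1,\dots,\mathbf{A}_d$), and to define $\varphi:L\to S$ by $\varphi(\mathbf{u}\mathbf{A})=\mathbf{u}\mathbf{b}$. The hypothesis $\lk\mathbf{A}\subseteq\lk\mathbf{b}$ is exactly what is needed to see that $\varphi$ is well defined: if $\mathbf{u}\mathbf{A}=\mathbf{v}\mathbf{A}$ then $(\mathbf{u},\mathbf{v})\in\lk\mathbf{A}\subseteq\lk\mathbf{b}$, so $\mathbf{u}\mathbf{b}=\mathbf{v}\mathbf{b}$. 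Left $S$-linearity of $\varphi$ is immediate. By (E1) there is a left $S$-linear $\varphi_0:S^{1\times n}\to S$ extending $\varphi$; writing $\mathbf{w}^j=\varphi_0(\mathbf{E}^j{}^{\mathsf T})$ — more carefully, letting $\mathbf{w}\in S^{n\times 1}$ have entries $\mathbf{w}_j=\varphi_0(\mathbf{E}_j)$ where $\mathbf{E}_j$ is the $j$th unit row vector — left-linearity gives $\varphi_0(\mathbf{x})=\mathbf{x}\mathbf{w}$ for all $\mathbf{x}$. Applying this to $\mathbf{x}=\mathbf{A}_i$ yields $\mathbf{b}_i=\varphi(\mathbf{A}_i)=\varphi_0(\mathbf{A}_i)=\mathbf{A}_i\mathbf{w}$, i.e.\ $\mathbf{b}=\mathbf{A}\mathbf{w}\in\ri\mathbf{A}$.

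For the converse (E2) $\Rightarrow$ (E1), I would take a finitely generated left semimodule $L\subseteq S^n$, say generated by vectors $\mathbf{g}_1,\dots,\mathbf{g}_d$, and a left $S$-linear $\varphi:L\to S$. Form the $d\times n$ matrix $\mathbf{A}$ whose rows are the $\mathbf{g}_i$, and the column $\mathbf{b}\in S^{d\times 1}$ with $\mathbf{b}_i=\varphi(\mathbf{g}_i)$. Here the point to check is that $\lk\mathbf{A}\subseteq\lk\mathbf{b}$: if $\mathbf{u}\mathbf{A}=\mathbf{v}\mathbf{A}$, then $\sum_i\mathbf{u}_i\mathbf{g}_i=\sum_i\mathbf{v}_i\mathbf{g}_i$ as elements of $L$, and applying $\varphi$ together with its linearity gives $\mathbf{u}\mathbf{b}=\sum_i\mathbf{u}_i\varphi(\mathbf{g}_i)=\varphi\bigl(\sum_i\mathbf{u}_i\mathbf{g}_i\bigr)=\varphi\bigl(\sum_i\mathbf{v}_i\mathbf{g}_i\bigr)=\mathbf{v}\mathbf{b}$. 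Then (E2) supplies $\mathbf{w}\in S^{n\times 1}$ with $\mathbf{b}=\mathbf{A}\mathbf{w}$, and $\varphi_0:S^n\to S$, $\varphi_0(\mathbf{x})=\mathbf{x}\mathbf{w}$, is left $S$-linear and satisfies $\varphi_0(\mathbf{g}_i)=\mathbf{A}_i\mathbf{w}=\mathbf{b}_i=\varphi(\mathbf{g}_i)$; since the $\mathbf{g}_i$ generate $L$ and both maps are linear, $\varphi_0$ agrees with $\varphi$ on all of $L$.

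Neither direction involves a genuine obstacle — this is essentially a bookkeeping translation — but the step that most requires care is the well-definedness of $\varphi$ in the first direction and, symmetrically, the verification $\lk\mathbf{A}\subseteq\lk\mathbf{b}$ in the second: both hinge on correctly matching the "pair" formulation of $\lk$ (the set of $(\mathbf{u},\mathbf{v})$ with $\mathbf{u}\mathbf{A}=\mathbf{v}\mathbf{A}$) with the condition one extracts from linearity of $\varphi$. One should also be slightly careful that a left $S$-linear function $S^n\to S$ is automatically of the form $\mathbf{x}\mapsto\mathbf{x}\mathbf{w}$; this uses only that $S^n$ is free on the unit vectors and that $\varphi_0$ commutes with finite sums and with left multiplication by scalars, which is part of the definition of left $S$-linearity.
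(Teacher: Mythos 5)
Your proposal is correct and follows essentially the same route as the paper: both directions translate between a linear function on $\li\mathbf{A}$ defined by $\mathbf{u}\mathbf{A}\mapsto\mathbf{u}\mathbf{b}$ and the matrix condition, using the kernel inclusion for well-definedness and evaluating the extension on the unit row vectors to produce the column $\mathbf{w}$ with $\mathbf{b}=\mathbf{A}\mathbf{w}$. The only blemish is the momentary slip in notation for $\varphi_0(\mathbf{E}_j)$, which you immediately correct, so nothing is missing.
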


\begin{proof}
Assume (E1) is true, and let $\mathbf{A}\in S^{d\times n}$, $\mathbf{b}\in S^{d\times 1}$ be such that \begin{equation}\label{eqeq3}
\operatorname{left\,ker}\mathbf{A}\subseteq\operatorname{left\,ker}\mathbf{b}.
\end{equation}
We define the mapping $\varphi:\li\mathbf{A}\to S$ by $$\varphi\left(\sum\limits_{i=1}^d \lambda_i\mathbf{A}_i\right)=\sum\limits_{i=1}^d \lambda_i\mathbf{b}_i,$$
which is well defined because of~\eqref{eqeq3}. Since $\varphi$ is left $S$-linear, we can use the exactness of $S$ and obtain a left $S$-linear mapping $\psi:S^n\to S$ such that $\psi\left|\right._{\li\mathbf{A}}=\varphi$. Denoting $\alpha_i:=\psi(\mathbf{E}_i)$, we get
$$\mathbf{b}_i=\varphi(\mathbf{A}_i)=\psi\left(\sum\limits_{j=1}^n\mathbf{A}_i^j\mathbf{E}_j\right)=\sum\limits_{j=1}^n\mathbf{A}_i^j\alpha_j,$$ which implies $\mathbf{b}\in\operatorname{right\,im}\mathbf{A}$ and proves (E2).

Now we assume that (E2) is true, and we consider a finitely generated left semimodule $L\subseteq S^n$ and a left $S$-linear function $\varphi:L\to S$. We can write $L=\li\mathbf{A}$ for some matrix $\mathbf{A}$, and we define the vector $\mathbf{b}\in S^{d\times 1}$ by the formula $\mathbf{b}_i=\varphi(\mathbf{A}_i)$. (Here, the dimension $d$ is the number of rows of $\mathbf{A}$, or, equivalently, the number of generators of $L$.) The equation~\eqref{eqeq3} is true because $\varphi$ is well defined, so (E2) implies $\mathbf{b}\in\operatorname{right\,im}\mathbf{A}$. Therefore, we have $\mathbf{b}=\sum_j\mathbf{A}^j\alpha_j$ for some $\alpha_1,\ldots,\alpha_n\in S$, and then $\psi(x_1,\ldots,x_n)=x_1\alpha_1+\ldots+x_n\alpha_n$ is a mapping from $S^n$ to $S$ that coincides with $\varphi$ on $R$. We see that $S$ is left exact, so (E1) is true.
\end{proof}

Let us present an application of Theorem~\ref{lemlem1}. The following corollary presents a rather powerful condition that holds in all exact rings. This result seems to be new.

\begin{cor}\label{corcor1}
Any left exact semiring contains an element $e$ such that $1+1+e=1$.
\end{cor}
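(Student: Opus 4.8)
The plan is to invoke the equivalence (E1)$\Leftrightarrow$(E2) from Theorem~\ref{lemlem1}: it suffices to produce a single pair $\mathbf{A}\in S^{d\times n}$, $\mathbf{b}\in S^{d\times 1}$ with $\lk\mathbf{A}\subseteq\lk\mathbf{b}$, because the forced consequence $\mathbf{b}\in\ri\mathbf{A}$ will then exhibit the required element. The guiding idea is to ``pin'' two coordinates of the unknown preimage to the value $1$ by means of unit rows, and to add one extra all-ones row, again sent to $1$, so that the third coordinate of the preimage becomes a slack element $e$ satisfying $1+1+e=1$.

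Concretely, I would take $n=d=3$, let $\mathbf{A}$ be the matrix with rows $\mathbf{A}_1=\mathbf{E}_1$, $\mathbf{A}_2=\mathbf{E}_2$, $\mathbf{A}_3=\mathbf{E}_1+\mathbf{E}_2+\mathbf{E}_3$, and set $\mathbf{b}=(1,1,1)^{\mathrm T}$. One computes $\mathbf{u}\mathbf{A}=(u_1+u_3,\,u_2+u_3,\,u_3)$, so $(\mathbf{u},\mathbf{v})\in\lk\mathbf{A}$ amounts to the three equalities $u_3=v_3$, $u_1+u_3=v_1+u_3$, $u_2+u_3=v_2+u_3$; adding $u_2$ to both sides of the second equality and $v_1$ to both sides of the third, then chaining the results together with $u_3=v_3$, yields $u_1+u_2+u_3=v_1+v_2+v_3$, that is, $(\mathbf{u},\mathbf{v})\in\lk\mathbf{b}$. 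Hence $\lk\mathbf{A}\subseteq\lk\mathbf{b}$, so Theorem~\ref{lemlem1} gives $\alpha_1,\alpha_2,\alpha_3\in S$ with $\mathbf{b}=\mathbf{A}(\alpha_1,\alpha_2,\alpha_3)^{\mathrm T}$. The first two coordinates of this identity read $\alpha_1=1$ and $\alpha_2=1$, and the third reads $\alpha_1+\alpha_2+\alpha_3=1$; thus $e:=\alpha_3$ satisfies $1+1+e=1$.

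The only place requiring any care is the inclusion $\lk\mathbf{A}\subseteq\lk\mathbf{b}$: since a semiring has no additive cancellation, one must add matching terms to both sides of the defining equalities rather than subtract, but this is exactly the short manipulation indicated above, and I expect no further difficulty. (Note also that $e=0$ would not work in general, since $1+1+0=1+1$ need not equal $1$; this is precisely why the extra all-ones row, rather than a second unit row, is essential in the construction.)
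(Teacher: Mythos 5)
Your proof is correct and follows essentially the same strategy as the paper: apply condition (E2) of Theorem~\ref{lemlem1} to a concrete pair $(\mathbf{A},\mathbf{b})$, verify the kernel inclusion by adding matching terms to circumvent the lack of additive cancellation, and read off $e$ from the resulting linear system. The paper merely uses a more compact witness, namely the $2\times 2$ matrix with rows $(0,1)$ and $(1,1)$ together with $\mathbf{b}=(1+1,\,1)^{\mathrm T}$, but the underlying mechanism is identical.
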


\begin{proof}
We consider the matrices
$$\mathbf{A}=
\begin{pmatrix}
0&1\\
1&1
\end{pmatrix},\,\,\,\,
\mathbf{b}=
\begin{pmatrix}
1+1\\
1
\end{pmatrix},$$
and we apply Theorem~\ref{lemlem1}.
The condition (E2) shows that either $\mathbf{b}\in\operatorname{right\,im}\mathbf{A}$ or  $\operatorname{left\,ker}\mathbf{A}\nsubseteq \operatorname{left\,ker}\mathbf{b}$. Let us treat these two cases separately.

\textit{Case 1.} If $\mathbf{b}\in\operatorname{right\,im}\mathbf{A}$, then there are $x_1,x_2\in S$ such that $x_2=1+1$ and $x_1+x_2=1$. We get $x_1+1+1=x_1+x_2=1$, which implies the desired conclusion.

\textit{Case 2.} If $\operatorname{left\,ker}\mathbf{A}\nsubseteq \operatorname{left\,ker}\mathbf{b}$, then there are vectors $\mathbf{u},\mathbf{v}\in S^{1\times 2}$ such that $\mathbf{u}\mathbf{A}=\mathbf{v}\mathbf{A}$ and
\begin{equation}
\label{eqeq1}\mathbf{u}\mathbf{b}\neq\mathbf{v}\mathbf{b}.
\end{equation}
The former condition shows that $\mathbf{u}^2=\mathbf{v}^2$, $\mathbf{u}^1+\mathbf{u}^2=\mathbf{v}^1+\mathbf{v}^2$, so we get
\begin{equation}\label{eqeq2}
\mathbf{u}^1+\mathbf{u}^1+\mathbf{u}^2=
\mathbf{u}^1+\mathbf{v}^1+\mathbf{v}^2=
\mathbf{u}^1+\mathbf{v}^1+\mathbf{u}^2=
\mathbf{v}^1+\mathbf{v}^1+\mathbf{v}^2,
\end{equation}
which is a contradiction. In fact, the left-hand side of~\eqref{eqeq1} coincides with the left-hand side of~\eqref{eqeq2}, and the right-hand side of~\eqref{eqeq1} coincides with the right-hand side of~\eqref{eqeq2}. Therefore, Case~2 is not an option, and the proof is complete.
\end{proof}

\begin{cor}\label{corcor2}
Any left exact semifield is either a ring or satisfies $1+1=1$.
\end{cor}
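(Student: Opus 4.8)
The plan is to leverage Corollary~\ref{corcor1}, which guarantees an element $e$ with $1+1+e=1$, and exploit the fact that in a semifield every nonzero element is invertible. First I would set $s:=1+1$ and observe that $1+1+e=1$ means $s+e=1$. The goal is to show that if $S$ is not a ring, then $s=1$. The natural dichotomy is on whether $s=1+1$ is zero or nonzero.

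If $s=0$, then $1+1=0$, so every element has an additive inverse (namely itself), and the semiring is a ring; this is the first alternative. So I would assume $s\neq 0$. Then $s$ is invertible in the semifield. From $s+e=1$ I would multiply through by $s^{-1}$ to get $1+s^{-1}e=s^{-1}$, and manipulate to relate things back to $1+1$. The cleaner route: from $s+e=1$ add $1$ to both sides to get $s+e+1 = 1+1 = s$, i.e. $s+(e+1)=s$; more usefully, iterate the relation. Multiplying $s+e=1$ by $s$ gives $s^2+se = s$, and since $s=1+1$ we have $s^2=(1+1)(1+1)=1+1+1+1$. Combining with $s+e=1$ appropriately should force a collapse.

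Here is the key step I would aim for. From $s+e=1$, multiply both sides on the left by $1+1=s$: this yields $s\cdot s + s\cdot e = s$, that is $(1+1)(1+1) + (1+1)e = 1+1$. Now $(1+1)(1+1) = (1+1) + (1+1) = s + s$, and $(1+1)e = e+e$, so we get $s+s+e+e = s$. But $s+e=1$, so $s+s+e+e = (s+e)+(s+e) = 1+1 = s$, which is consistent but gives nothing new — so this particular manipulation is too weak. The actual leverage must come from invertibility of $e$ itself or of $1+e$. Note $s+e=1$ with $s\neq 0$; if $e=0$ then $s=1$ and we are done, so assume $e\neq 0$, hence $e$ is invertible. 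Then from $s+e=1$ we get $s=1+(-e)$ only in a ring, which we can't use; instead multiply $s+e=1$ by $e^{-1}$: $se^{-1}+1 = e^{-1}$, so $e^{-1} = 1+se^{-1}$, showing $e^{-1} \geq 1$ in the natural preorder, i.e. $e^{-1}=1+t$ for $t=se^{-1}$. Substituting back, $1 = s+e = s + (1+t)^{-1}$; this is getting circular.

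The honest assessment is that the crux is extracting $1+1=1$ from $1+1+e=1$ using only semifield axioms, and the right tool is likely a short invertibility argument of the following shape: assume $S$ is not a ring, so $1+1\neq 0$ (else $x+x=0$ for all $x$ and $S$ is a ring), hence $1+1$ is invertible; write $(1+1)^{-1}=c$ and apply the relation $1+1+e=1$ after multiplying by $c$ to get $1+ce=c$, i.e. $c=1+ce$; then iterate or compare with $c(1+1)=1$ expanded as $c+c=1$, giving $c+c=1$ and $c=1+ce$, hence $1=c+c=(1+ce)+c$; the main obstacle — and the step I expect to require the real idea — is choosing the multiplier and the substitution so that the two relations $c+c=1$ and $1+1+e=1$ force $1+1=1$ outright (equivalently $c=1$, equivalently $e=0$). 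I would expect this to come down to showing $c$ is both $\leq 1$ and $\geq 1$ in the natural preorder $x\preceq y \iff y=x+z$, concluding $c=1$ by antisymmetry, which holds in a semifield whenever $1+1\ne 1$ fails to collapse — and if antisymmetry itself fails, that failure is exactly $1+1=1$, closing the argument. In the write-up I would present the clean two-line version once the correct multiplier is pinned down, and flag the $1+1=0$ case as the ring alternative at the very start.
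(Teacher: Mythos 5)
Your proposal correctly takes Corollary~\ref{corcor1} as the starting point (an element $e$ with $1+1+e=1$) and correctly identifies the ring alternative, but it never actually derives $1+1=1$ in the remaining case: you say yourself that the decisive step `requires the real idea' of choosing the right multiplier, and none of the manipulations you carry out closes the argument --- each one, as you note, returns something circular or vacuous. This is a genuine gap, not a presentational one. Moreover, your proposed fallback is unsound as stated: from $c=(1+1)^{-1}$ you obtain $c=1+ce$ and $1=c+c$, hence $c\succeq 1$ and $1\succeq c$ in the preorder $x\preceq y\iff\exists z\,(y=x+z)$, but in a general semifield this preorder need \emph{not} be antisymmetric, and its failure does not obviously reduce to $1+1=1$ (a failure of antisymmetry only yields relations of the form $x=x+w$ for certain $x,w$, not the specific identity $1+1=1$). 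So the concluding step $c=1$ cannot be extracted this way without additive cancellation or order hypotheses you do not have.

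The missing idea in the paper's proof is a one-line computation showing that $1+e$ is a \emph{multiplicative} idempotent: $(1+e)^2=1+e+e+e^2=1+e(1+1+e)=1+e\cdot 1=1+e$. In a semifield the only multiplicative idempotents are $0$ and $1$ (if nonzero, multiply the identity $t^2=t$ by $t^{-1}$). Hence either $1+e=0$, in which case every $x$ has the additive inverse $xe$ and $S$ is a ring, or $1+e=1$, in which case adding $1$ gives $1+1=1+1+e=1$. Note that the case split that actually works is on whether $1+e$ is zero, not on whether $1+1$ is zero; your split on $1+1=0$ is a valid sufficient condition for being a ring but does not line up with the step that finishes the proof.
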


\begin{proof}
Let $e$ be the element as in Corollary~\ref{corcor1}. We get
$$(1+e)^2=1+e+e+e^2=1+e(1+1+e)=1+e,$$
so that $1+e=0$ or $1+e=1$. The former condition would imply that we have a ring, and the latter one shows that $1+1+e=1+1$ or $1+1=1$ again by Corollary~\ref{corcor1}.
\end{proof}

\section{A semifield version of Theorem~\ref{lemlem1}}

In this section we sharpen the condition (E2) in Theorem~\ref{lemlem1} under the additional assumption that $S$ is a semifield. Recall that a matrix $\mathbf{C}\in S^{n\times n}$ is \textit{invertible} if there exists a matrix $\mathbf{C}^{-1}$ such that $\mathbf{C}\mathbf{C}^{-1}=\mathbf{C}^{-1}\mathbf{C}=\mathbf{E}$.

\begin{obs}\label{obsobs1}
Let $\mathbf{C}\in S^{d\times d}$, $\mathbf{D}\in S^{n\times n}$ be invertible matrices, and let $\mathbf{A}\in S^{d\times n}$, $\mathbf{b}\in S^{d\times 1}$ be arbitrary. Then

\noindent (1) $\lk \mathbf{A}\subseteq\lk \mathbf{b} $ if and only if $\lk \mathbf{C}\mathbf{A}\mathbf{D}\subseteq\lk \mathbf{C}\mathbf{b}$,

\noindent (2) $\mathbf{b}\in \ri \mathbf{A}$ if and only if $ \mathbf{C}\mathbf{b}\in \ri \mathbf{C}\mathbf{A}\mathbf{D}$.
\end{obs}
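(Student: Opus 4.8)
The statement to prove is Observation~\ref{obsobs1}, which asserts that the two hypotheses appearing in condition (E2) of Theorem~\ref{lemlem1}, namely ``$\lk\mathbf{A}\subseteq\lk\mathbf{b}$'' and ``$\mathbf{b}\in\ri\mathbf{A}$'', are both invariant under the transformations $\mathbf{A}\mapsto\mathbf{C}\mathbf{A}\mathbf{D}$, $\mathbf{b}\mapsto\mathbf{C}\mathbf{b}$ when $\mathbf{C},\mathbf{D}$ are invertible. The plan is to prove each of the two equivalences separately, and in each case to prove only one direction, since the reverse direction follows by applying the forward direction to $\mathbf{C}\mathbf{A}\mathbf{D}$, $\mathbf{C}\mathbf{b}$ with the invertible matrices $\mathbf{C}^{-1}$ and $\mathbf{D}^{-1}$ (using that $\mathbf{C}^{-1}(\mathbf{C}\mathbf{A}\mathbf{D})\mathbf{D}^{-1}=\mathbf{A}$ and $\mathbf{C}^{-1}(\mathbf{C}\mathbf{b})=\mathbf{b}$).

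For part~(1), I would first observe that right-multiplication by the invertible matrix $\mathbf{D}$ does not change the left kernel at all: $\mathbf{u}(\mathbf{A}\mathbf{D})=\mathbf{v}(\mathbf{A}\mathbf{D})$ holds if and only if $\mathbf{u}\mathbf{A}=\mathbf{v}\mathbf{A}$, because we may cancel $\mathbf{D}$ on the right by multiplying by $\mathbf{D}^{-1}$. Hence $\lk(\mathbf{A}\mathbf{D})=\lk\mathbf{A}$, and it suffices to handle left-multiplication by $\mathbf{C}$. For that, note that the linear change of variables $\mathbf{u}\mapsto\mathbf{u}\mathbf{C}$ is a bijection of $S^{1\times d}$ with inverse $\mathbf{u}\mapsto\mathbf{u}\mathbf{C}^{-1}$; under this substitution, the pair $(\mathbf{u},\mathbf{v})\in\lk(\mathbf{C}\mathbf{A})$ (meaning $\mathbf{u}\mathbf{C}\mathbf{A}=\mathbf{v}\mathbf{C}\mathbf{A}$) corresponds exactly to the pair $(\mathbf{u}\mathbf{C},\mathbf{v}\mathbf{C})\in\lk\mathbf{A}$, and likewise $(\mathbf{u},\mathbf{v})\in\lk(\mathbf{C}\mathbf{b})$ corresponds to $(\mathbf{u}\mathbf{C},\mathbf{v}\mathbf{C})\in\lk\mathbf{b}$. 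Since this is the \emph{same} bijective substitution on both sides, the inclusion $\lk\mathbf{A}\subseteq\lk\mathbf{b}$ transports precisely to $\lk(\mathbf{C}\mathbf{A})\subseteq\lk(\mathbf{C}\mathbf{b})$. Combining the two reductions gives $\lk(\mathbf{C}\mathbf{A}\mathbf{D})\subseteq\lk(\mathbf{C}\mathbf{b})$.

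For part~(2), suppose $\mathbf{b}\in\ri\mathbf{A}$, say $\mathbf{b}=\mathbf{A}\mathbf{w}$ for some $\mathbf{w}\in S^{n\times1}$. Then $\mathbf{C}\mathbf{b}=\mathbf{C}\mathbf{A}\mathbf{w}=(\mathbf{C}\mathbf{A}\mathbf{D})(\mathbf{D}^{-1}\mathbf{w})$, which exhibits $\mathbf{C}\mathbf{b}$ as an element of $\ri(\mathbf{C}\mathbf{A}\mathbf{D})$ with witness vector $\mathbf{D}^{-1}\mathbf{w}$. The reverse direction is, as above, the same argument applied to $\mathbf{C}^{-1}$ and $\mathbf{D}^{-1}$.

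Honestly, there is no real obstacle here: the entire content is that invertible matrices give bijections, so all the relevant sets are merely relabeled rather than genuinely altered. The only point requiring a moment's care is the bookkeeping in part~(1)—making sure that one uses the \emph{same} substitution $\mathbf{u}\mapsto\mathbf{u}\mathbf{C}$ simultaneously for $\mathbf{A}$ and for $\mathbf{b}$, so that the set inclusion is preserved rather than merely the two sets being separately in bijection with their images. Once that is observed, the proof is a two or three line computation in each part.
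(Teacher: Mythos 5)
Your proof is correct and follows essentially the same route as the paper's: both arguments rest on the observation that the substitution $\mathbf{u}\mapsto\mathbf{u}\mathbf{C}^{\pm1}$ is a bijection carrying the relevant left kernels onto each other (the paper phrases this via a pair $(\mathbf{u},\mathbf{v})\in\lk\mathbf{A}\setminus\lk\mathbf{b}$ and its image $(\mathbf{u}\mathbf{C}^{-1},\mathbf{v}\mathbf{C}^{-1})$), and on the identity $\mathbf{C}\mathbf{b}=(\mathbf{C}\mathbf{A}\mathbf{D})(\mathbf{D}^{-1}\mathbf{w})$ for part (2). Your separate treatment of the $\mathbf{D}$-factor is a minor repackaging of the same computation, and your care about using one simultaneous substitution for both $\mathbf{A}$ and $\mathbf{b}$ is exactly the right point.
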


\begin{proof}
Let us assume $(\mathbf{u},\mathbf{v})\in\lk \mathbf{A}\setminus \lk\mathbf{b}$, which means that $\mathbf{u}\mathbf{A}=\mathbf{v}\mathbf{A}$, $\mathbf{u}\mathbf{b}\neq\mathbf{v}\mathbf{b}$. We define
$\mathbf{u}'=\mathbf{u}\mathbf{C}^{-1}$, $\mathbf{v}'=\mathbf{v}\mathbf{C}^{-1}$, and we get
$$\mathbf{u}'\mathbf{C}\mathbf{A}\mathbf{D}=
\mathbf{u}\mathbf{C}^{-1}\mathbf{C}\mathbf{A}\mathbf{D}=
\mathbf{u}\mathbf{A}\mathbf{D}=\mathbf{v}\mathbf{A}\mathbf{D}=
\mathbf{v}\mathbf{C}^{-1}\mathbf{C}\mathbf{A}\mathbf{D}=
\mathbf{v}'\mathbf{C}\mathbf{A}\mathbf{D},$$
$$\mathbf{u}'\mathbf{C}\mathbf{b}=
\mathbf{u}\mathbf{C}^{-1}\mathbf{C}\mathbf{b}=
\mathbf{u}\mathbf{b}\neq \mathbf{v}\mathbf{b}=
\mathbf{v}\mathbf{C}^{-1}\mathbf{C}\mathbf{b}=
\mathbf{v}'\mathbf{C}\mathbf{b},$$
which means that $(\mathbf{u}',\mathbf{v}')\in\lk \mathbf{C}\mathbf{A}\mathbf{D}\setminus \lk\mathbf{C}\mathbf{b}$.
This proves the 'only if' direction of (1), and the 'if' direction follows as well by symmetry.

To prove (2), we note that a vector $\mathbf{w}$ satisfies $\mathbf{A}\mathbf{w}=\mathbf{b}$ if and only if the vector $\mathbf{w}'=\mathbf{D}^{-1}\mathbf{w}$ satisfies $\mathbf{C}\mathbf{A}\mathbf{D}\mathbf{w}'=\mathbf{C}\mathbf{b}$.
\end{proof}

Let us say that a matrix is \textit{column-stochastic} if the sum of elements in every column equals one. A semiring $S$ is called \textit{zero-sum free} if $a+b=0$ implies $a=b=0$ for all $a,b\in S$. We are ready to show that, in the case of zero-sum-free semifields, Theorem~\ref{lemlem1} remains true if we restrict the possible choices of $\mathbf{A}$ by column-stochastic matrices and the choices of $\mathbf{b}$ by vectors whose coordinates are zeros and ones.

\begin{cor}\label{obsobs2}
Let $S$ be a zero-sum free semifield. Then the condition (E2) in Theorem~\ref{lemlem1} is equivalent to the following:

\noindent (E2') for any column-stochastic matrix $\mathbf{A}\in S^{d\times n}$ and any vector $\mathbf{b}\in \{0,1\}^{d\times 1}$, the condition $\operatorname{left\,ker}\mathbf{A}\subseteq \operatorname{left\,ker}\mathbf{b}$ implies $\mathbf{b}\in\operatorname{right\,im}\mathbf{A}$.
\end{cor}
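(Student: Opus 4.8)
The implication (E2)$\Rightarrow$(E2') requires no argument, since (E2') is simply (E2) restricted to a special class of pairs $(\mathbf{A},\mathbf{b})$. For the converse, the plan is to assume (E2') and, starting from arbitrary $\mathbf{A}\in S^{d\times n}$ and $\mathbf{b}\in S^{d\times1}$ with $\lk\mathbf{A}\subseteq\lk\mathbf{b}$, transform this pair into one meeting the hypotheses of (E2') by multiplying $\mathbf{A}$ on the left and right by invertible matrices, and then transfer the conclusion back using Observation~\ref{obsobs1}. If $\mathbf{b}=\mathbf{0}$ there is nothing to do, since $\mathbf{b}=\mathbf{A}\mathbf{0}\in\ri\mathbf{A}$; so I may assume $\mathbf{b}\neq\mathbf{0}$, and then $\mathbf{A}\neq\mathbf{0}$ as well, for otherwise $\lk\mathbf{A}$ is all of $S^{1\times d}\times S^{1\times d}$ and the containment $\lk\mathbf{A}\subseteq\lk\mathbf{b}$ already forces $\mathbf{b}=\mathbf{0}$ (take pairs $(\mathbf{E}_i,\mathbf{0})$).

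The transformation proceeds in three moves. First I would delete the zero columns of $\mathbf{A}$: this changes neither $\lk\mathbf{A}$ (a zero column puts no constraint on a pair $(\mathbf{u},\mathbf{v})$, as $\mathbf{u}\mathbf{A}^j=0=\mathbf{v}\mathbf{A}^j$ automatically) nor the validity of $\mathbf{b}\in\ri\mathbf{A}$ (a zero column contributes nothing to any product $\mathbf{A}\mathbf{w}$), and the surviving matrix has at least one column, all of them nonzero. Second, I take $\mathbf{C}\in S^{d\times d}$ to be the diagonal matrix whose $i$th diagonal entry is $\mathbf{b}_i^{-1}$ when $\mathbf{b}_i\neq0$ and $1$ when $\mathbf{b}_i=0$; this $\mathbf{C}$ is invertible and $\mathbf{C}\mathbf{b}\in\{0,1\}^{d\times1}$. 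Third, since $\mathbf{C}$ is invertible the columns of $\mathbf{C}\mathbf{A}$ are still nonzero, and because $S$ is zero-sum free the sum of the entries of each such column is nonzero, hence invertible in the semifield $S$; letting $\mathbf{D}$ be the diagonal matrix that rescales the $j$th column of $\mathbf{C}\mathbf{A}$ by the inverse of the sum of its entries, $\mathbf{D}$ is invertible and $\mathbf{C}\mathbf{A}\mathbf{D}$ is column-stochastic. Now Observation~\ref{obsobs1}(1) gives $\lk\mathbf{C}\mathbf{A}\mathbf{D}\subseteq\lk\mathbf{C}\mathbf{b}$, so (E2') yields $\mathbf{C}\mathbf{b}\in\ri\mathbf{C}\mathbf{A}\mathbf{D}$, and Observation~\ref{obsobs1}(2) then gives $\mathbf{b}\in\ri\mathbf{A}$, completing the deduction of (E2).

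The only step that genuinely uses the standing hypotheses, and the one I expect to be the crux, is this last rescaling: it needs both that $S$ is a semifield (to invert the column sums) and that $S$ is zero-sum free (so that a nonzero column cannot have entry-sum $0$, which would make column-stochasticity unattainable by any invertible $\mathbf{D}$). Everything else is routine bookkeeping with Observation~\ref{obsobs1}, together with the elementary remark that in a zero-sum-free semiring a finite sum can vanish only when every summand vanishes.
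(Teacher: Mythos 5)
Your proof is correct and follows essentially the same route as the paper's: delete zero columns, normalize $\mathbf{b}$ by a diagonal matrix built from its nonzero entries, rescale columns by the inverses of their sums (using zero-sum freeness to guarantee these sums are invertible), and transfer the hypotheses and conclusion via Observation~\ref{obsobs1}. The only cosmetic difference is that you argue directly (E2')$\Rightarrow$(E2) while the paper argues the contrapositive, and you spell out the trivial $\mathbf{b}=\mathbf{0}$ edge case that the paper leaves implicit.
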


\begin{proof}
It is trivial that (E2) implies (E2'). To prove the opposite direction, assume that (E2) is not true. Then there are $\mathbf{A}\in S^{d\times n}$, $\mathbf{b}\in S^{d\times 1}$ such that $\operatorname{left\,ker}\mathbf{A}\subseteq \operatorname{left\,ker}\mathbf{b}$ and $\mathbf{b}\notin\operatorname{right\,im}\mathbf{A}$. The removal of zero columns of $A$ does not change these properties, so we can assume that every column of $A$ contains at least one non-zero entry. We define $\beta_i=\mathbf{b}_i$ if $\mathbf{b}_i\neq0$ and $\beta_i=1$ otherwise, and we set $\mathbf{C}$ to be the diagonal matrix with $\beta_1,\ldots,\beta_d$ on the diagonal. Further, we define $\alpha_j$ as the sum of the entries of the $j$th column of $\mathbf{C}^{-1}\mathbf{A}$. Since the semifield is zero-sum-free, the $\alpha_j$'s are non-zero, so we get an invertible matrix $\mathbf{D}$ if we put $\alpha_1,\ldots,\alpha_n$ on the diagonal and zeros everywhere else. Now we see that the matrix $\mathbf{C}^{-1}\mathbf{A}\mathbf{D}^{-1}$ is stochastic, the vector $\mathbf{C}^{-1}\mathbf{b}$ consists of zeros and ones, and the conditions $\operatorname{left\,ker}\mathbf{C}^{-1}\mathbf{A}\mathbf{D}^{-1}\subseteq \operatorname{left\,ker}\mathbf{C}^{-1}\textbf{b}$ and $\mathbf{C}^{-1}\mathbf{b}\notin \operatorname{right\,im}\mathbf{C}^{-1}\mathbf{A}\mathbf{D}^{-1}$ hold by Observation~\ref{obsobs1}. This shows that (E2') is not true.
\end{proof}

\section{Idempotent semifields are exact}

In this section we complete the proof of Theorem~\ref{thrthr}. Namely, we show that any semifield satisfying $1+1=1$ is necessarily left exact. In general, a semiring in which $1+1=1$ (or, equivalently, $x+x=x$ for all $x$) is called \textit{idempotent}. A natural (and very well known) ordering on an idempotent semiring is given as $x\geqslant y$ if and only if $x+y=x$. It is easy to see that the relation $\geqslant$ is a partial order compatible with the operations. In other words, the following result is true, see~\cite{Gol} for details.

\begin{obs}\label{obsord}
Let $S$ be an idempotent semiring and $p,q,r,s\in S$. Then

\noindent (1) $p\geqslant p$;

\noindent (2) If $p\geqslant q$, $q\geqslant r$, then $p \geqslant r$;

\noindent (3) If $p\geqslant q$, $q\geqslant p$, then $p=q$; 

\noindent (4) If $p\geqslant r$, $q\geqslant s$, then $p+q \geqslant r+s$ and $pq\geqslant rs$.
\end{obs}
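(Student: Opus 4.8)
The plan is to verify all four statements directly from the definition $x\geqslant y \iff x+y=x$, using only the semiring axioms together with the identity $x+x=x$. The latter I would record first, since it is the only consequence of $1+1=1$ that the argument really needs: by left distributivity, $x+x=x\cdot 1+x\cdot 1=x(1+1)=x\cdot 1=x$ for every $x\in S$.

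With this in hand, (1) is immediate, as $p+p=p$ says precisely $p\geqslant p$. For (2) I would assume $p+q=p$ and $q+r=q$ and compute $p+r=(p+q)+r=p+(q+r)=p+q=p$ using associativity, so $p\geqslant r$. For (3), from $p+q=p$ and $q+p=q$ commutativity gives $p=p+q=q+p=q$.

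For (4) the first move is to record the auxiliary monotonicity fact that $a+b\geqslant a$ and $a+b\geqslant b$ for all $a,b$, which holds because $(a+b)+a=(a+a)+b=a+b$ by idempotency (and symmetrically). Then, assuming $p+r=p$ and $q+s=q$, regrouping yields $(p+q)+(r+s)=(p+r)+(q+s)=p+q$, i.e. $p+q\geqslant r+s$. For the product I would expand $pq=(p+r)(q+s)=pq+ps+rq+rs$ by distributivity, so $pq=pq+(ps+rq+rs)$, giving $pq\geqslant ps+rq+rs$; combining this with $ps+rq+rs\geqslant rs$ (the auxiliary fact) and transitivity (part (2)) gives $pq\geqslant rs$.

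There is no genuine obstacle here; the only step that needs a moment's care is the multiplicative half of (4), where one should resist manipulating $pq$ directly and instead route the argument through the expansion of $(p+r)(q+s)$ and the additive monotonicity established along the way.
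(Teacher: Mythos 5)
Your verification is correct and complete. The paper itself gives no proof of this observation—it simply remarks that the claims are easy and refers to Golan's book—so your direct check from the definition $x\geqslant y\iff x+y=x$ (deriving $x+x=x$ from $1+1=1$ via distributivity, and handling the multiplicative half of (4) by expanding $(p+r)(q+s)$ and using additive monotonicity plus transitivity) is exactly the routine argument being omitted, and every step is sound.
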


We will write $p>q$ if $p\geqslant q$ and $p\neq q$. If $p\neq p+q$, then we write $q\nleqslant p$. The set of matrices or vectors over $S$ is still an idempotent semigroup with respect to addition, so these relations are applicable to matrices and vectors. Another obvious property of idempotent semirings is that they are zero-sum free.

\begin{obs}\label{obsord2}
Let $S$ be an idempotent semiring and $p,q\in S$. If $p+q=0$, then $p=q=0$.
\end{obs}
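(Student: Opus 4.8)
The plan is to exploit the idempotency law directly, with no auxiliary constructions. Starting from the hypothesis $p+q=0$, I would add $p$ to both sides and then simplify the resulting left-hand side in two different ways: on one hand, using associativity and $p+p=p$, we have $(p+p)+q=p+q=0$; on the other hand, using associativity again together with the defining property $p+0=p$ of the additive identity, we have $p+(p+q)=p+0=p$. Comparing the two evaluations gives $p=0$. Repeating the argument with $p$ and $q$ interchanged (which is legitimate since addition is commutative) yields $q=0$, and this completes the proof.

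There is essentially no obstacle here: the only facts used are idempotency $x+x=x$, associativity and commutativity of $+$, and $x+0=x$. If one prefers, the same computation can be packaged through the order of Observation~\ref{obsord}: the identity $p+q=0$ gives $0+p=p+p+q=p+q=0$, hence $0\geqslant p$, and symmetrically $0\geqslant q$; since $p+0=p$ and $q+0=q$ give $p\geqslant 0$ and $q\geqslant 0$, antisymmetry (part (3) of Observation~\ref{obsord}) forces $p=q=0$. I would present the direct two-line computation rather than this reformulation.
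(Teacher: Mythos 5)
Your argument is correct: adding $p$ to both sides of $p+q=0$ and evaluating $p+(p+q)$ as both $(p+p)+q=p+q=0$ and $p+0=p$ indeed forces $p=0$, and symmetry gives $q=0$. The paper offers no proof at all for this observation (it is dismissed as an ``obvious property'' of idempotent semirings), and your two-line computation is exactly the standard justification one would supply.
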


\begin{obs}\label{obsord3}
Let $S$ be an idempotent semifield containing at least three elements. Then there is an element $\lambda$ such that $\lambda\nleqslant 1$.
\end{obs}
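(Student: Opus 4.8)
The plan is to argue by contradiction: suppose no such $\lambda$ exists, so that $1+\lambda=1$ for every $\lambda\in S$, and then show that $S$ can have only two elements, contradicting the hypothesis.

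First I would note that, since $|S|\geqslant 3$ and $0\neq 1$, there is an element $\lambda\in S$ with $\lambda\neq 0$ and $\lambda\neq 1$; because $S$ is a semifield, the multiplicative inverse $\lambda^{-1}$ exists. Applying the assumption to $\lambda^{-1}$ gives $1+\lambda^{-1}=1$, and left-multiplying this identity by $\lambda$ yields $\lambda\cdot 1+\lambda\cdot\lambda^{-1}=\lambda\cdot 1$, that is, $\lambda+1=\lambda$. On the other hand, applying the assumption directly to $\lambda$ gives $\lambda+1=1+\lambda=1$. Comparing the two equalities forces $\lambda=1$, which contradicts the choice of $\lambda$. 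Hence the assumption is false, so some $\lambda$ satisfies $1+\lambda\neq 1$, i.e.\ $\lambda\nleqslant 1$, as required.

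I do not expect any genuine obstacle here: the only ingredient beyond the semiring axioms is the left multiplication by $\lambda^{-1}$, which is exactly what a semifield provides. The one point worth stating carefully is why the hypothesis $|S|\geqslant 3$ cannot be weakened — for $S=\{0,1\}$ one has $1+\lambda=1$ for every $\lambda$, so every element is $\leqslant 1$ and no such $\lambda$ exists.
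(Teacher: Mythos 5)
Your proof is correct and is essentially the paper's argument in contrapositive form: the paper picks $a\notin\{0,1\}$, and if $1+a=1$ it multiplies by $a^{-1}$ to get $1+a^{-1}=a^{-1}\neq 1$, which is exactly your computation. No issues.
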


\begin{proof}
Choose an arbitrary $a\notin\{0,1\}$. If $a\nleqslant 1$, then we are done, and otherwise we have $1+a=1$. This implies $a^{-1}+1=a^{-1}$, so we can take $\lambda=a^{-1}$.
\end{proof}

Before we proceed, we recall that a matrix $\mathbf{A}$ is called \textit{column-stochastic} if the sum of elements in every column of $\mathbf{A}$ equals one. A matrix is \textit{row-stochastic} if its transpose is column-stochastic.

\begin{lem}\label{lemidem1}
Let $S$ be an idempotent semifield containing at least three elements. Let $\mathbf{A}\in S^{d\times n}$ be a column-stochastic matrix that is not row-stochastic. Then there is a vector $\Lambda\nleqslant(1,\ldots,1)\in S^{1\times d}$ such that $\Lambda\mathbf{A}=(1,\ldots,1)$.
\end{lem}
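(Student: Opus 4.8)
The plan is to exploit the idempotent order and the column-stochastic hypothesis to manufacture, column by column, a witness vector $\Lambda$ that acts as a left inverse-like object. Since $\mathbf{A}$ is column-stochastic, for every $j$ we have $\sum_i \mathbf{A}_i^j = 1$; since $\mathbf{A}$ is not row-stochastic, there is some row index, say $r$, with $\sum_j \mathbf{A}_r^j \neq 1$. By Observation~\ref{obsord3}, fix an element $\mu$ with $\mu \nleqslant 1$; this is the ``excess'' we want to inject into the $r$th coordinate of $\Lambda$ while keeping $\Lambda \mathbf{A}$ exactly equal to $(1,\ldots,1)$.

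First I would try $\Lambda = (1,\ldots,1) + \mu\,\mathbf{E}_r$, i.e. the all-ones vector with a large value $\mu$ placed in position $r$. Then $\Lambda\mathbf{A} = (1,\ldots,1)\mathbf{A} + \mu\,\mathbf{A}_r$, whose $j$th coordinate is $\bigl(\sum_i \mathbf{A}_i^j\bigr) + \mu\,\mathbf{A}_r^j = 1 + \mu\,\mathbf{A}_r^j$. For this to equal $1$ in every coordinate we need $\mu\,\mathbf{A}_r^j \leqslant 1$ for all $j$, equivalently $\mathbf{A}_r^j \leqslant \mu^{-1}$ for all $j$; note $\mu \nleqslant 1$ gives $\mu^{-1} < 1$, so this is a genuine upper-bound condition on the $r$th row. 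If the $r$th row happens to satisfy it, we are done, because $\Lambda \nleqslant (1,\ldots,1)$ is immediate from $\mu \nleqslant 1$ in coordinate $r$. The remaining work is to handle an arbitrary non-row-stochastic $\mathbf{A}$: I would first normalise. Using Observation~\ref{obsobs1} together with a diagonal invertible matrix $\mathbf{D}$ whose $j$th diagonal entry is the maximum entry of column $j$ (well-defined and nonzero since columns are nonzero), replace $\mathbf{A}$ by $\mathbf{D}^{-1}$-rescaled then re-column-normalised version so that every entry is $\leqslant 1$; more carefully, scale each column so its largest entry is $1$, which keeps all entries $\leqslant 1$ and keeps the column sum equal to $1$ by idempotency (the sum of things $\leqslant 1$ with a $1$ present is $1$). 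After this reduction every $\mathbf{A}_i^j \leqslant 1$, the matrix stays column-stochastic, and it is still not row-stochastic because row-stochasticity is preserved under such column scalings only if... — and here I must be careful, since column scaling does change row sums.

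The cleaner route, and the one I would actually carry out, is to choose the bad row $r$ first and only then pick $\mu$ adapted to it. Since all entries of a column-stochastic matrix over an idempotent semifield automatically satisfy $\mathbf{A}_i^j \leqslant 1$ (each entry is a summand of a sum equal to $1$, and $x \leqslant x+y$), the condition $\mathbf{A}_r^j \leqslant \mu^{-1}$ for all $j$ is really the condition that $\mu^{-1} \geqslant \max_j \mathbf{A}_r^j$, i.e. $\mu \leqslant \bigl(\max_j \mathbf{A}_r^j\bigr)^{-1}$. So set $m_r := \sum_j \mathbf{A}_r^j$ (this equals the max of the row entries in the idempotent order, since $\geqslant$ restricted to a finite set has the sum as supremum) and take $\mu := m_r^{-1}$. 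Two things must be checked: (a) $\mu\,\mathbf{A}_r^j \leqslant 1$ for every $j$, which holds because $\mathbf{A}_r^j \leqslant m_r$ implies $\mu \mathbf{A}_r^j \leqslant \mu m_r = 1$ by Observation~\ref{obsord}(4); and (b) $\mu \nleqslant 1$, equivalently $1 \nleqslant m_r^{-1}$, equivalently $m_r \nleqslant 1$ — but the whole point of $r$ being a bad row is that $m_r = \sum_j \mathbf{A}_r^j \neq 1$, and since each $\mathbf{A}_r^j \leqslant 1$ we have $m_r \leqslant 1$, so $m_r \neq 1$ forces $m_r < 1$, hence $1 \nleqslant m_r$, hence after inverting $1 \nleqslant m_r^{-1} = \mu$, i.e. $\mu \nleqslant 1$. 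Wait — we need $\mu \nleqslant 1$, i.e. $\mu \neq \mu + 1$; from $m_r < 1$ we get $m_r^{-1} > 1$, i.e. $\mu > 1$, i.e. $\mu = \mu + 1$, which is the \emph{wrong} direction. Here is the subtlety and the main obstacle: multiplicative inversion \emph{reverses} the idempotent order, so $m_r < 1$ gives $\mu = m_r^{-1} > 1$, which means $1 \leqslant \mu$, not $\mu \nleqslant 1$. So the vector $\Lambda = (1,\ldots,1) + \mu \mathbf{E}_r$ with $\mu > 1$ is simply $(\mu,1,\ldots,1)$ up to position, and whether $\Lambda \nleqslant (1,\ldots,1)$ holds amounts to whether $\mu \nleqslant 1$, which \emph{fails}.

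Therefore the real construction must use a small element, not a large one. I would instead look for $\Lambda$ of the form $\Lambda = (1,\ldots,1) + \nu\mathbf{E}_r$ where the intended effect is different, or — more promisingly — use a bad row $r$ with $m_r \nleqslant 1$ in the \emph{other} sense. Re-examining: $\mathbf{A}$ not row-stochastic means some row sum is $\neq 1$; a row sum of a column-stochastic idempotent matrix need not be $\leqslant 1$ (that was my error — entries are $\leqslant 1$ individually but the row \emph{sum} over $j$ can exceed $1$ if there are several columns). Hmm, no: a sum of finitely many elements each $\leqslant 1$ is still $\leqslant 1$ in an idempotent semifield? That is false in general — e.g. in $\mathbb{T}=(\mathbb{R}\cup\{\infty\},\min,+)$ with $1$ the element $0$, entries $\leqslant 0$ (i.e. $\geqslant 0$ as reals under min-ordering)... this needs care and is exactly the crux. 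So the \textbf{main obstacle} I anticipate is precisely pinning down the direction of the inequality: identifying a row index $r$ and an element $\lambda$ with $\lambda \nleqslant 1$ such that $\lambda$ times the $r$th row, added to the all-ones combination of the other rows, stays $\leqslant (1,\dots,1)$ coordinatewise (so the sum is exactly $(1,\dots,1)$). I would resolve it by casing on whether the offending row sum is $\nleqslant 1$ or is $<1$, using Observation~\ref{obsord3} to supply an element incomparable-below $1$ and Observation~\ref{obsord}(4) plus $x \leqslant x+y$ to control the coordinatewise comparisons, and in the $<1$ case transposing the trick to a column (using column-stochasticity) rather than the row. Once the correct $\Lambda$ with $\Lambda \nleqslant (1,\dots,1)$ and $\Lambda\mathbf{A} \leqslant (1,\dots,1)$ is in hand, the reverse inequality $\Lambda\mathbf{A} \geqslant (1,\dots,1)$ is automatic since $\Lambda \geqslant (1,\ldots,1)$ componentwise-ish and $\mathbf{A}$ is column-stochastic, giving $\Lambda\mathbf{A} = (1,\ldots,1)$ by Observation~\ref{obsord}(3).
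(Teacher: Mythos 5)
Your ``cleaner route'' construction is essentially the right one --- it is a one-row variant of the paper's actual argument --- but you then reject it because of a misreading of the relation $\nleqslant$, and the proposal ends in unresolved hand-waving rather than a proof. The paper defines $q\nleqslant p$ to mean $p+q\neq p$, i.e.\ exactly the \emph{negation} of $q\leqslant p$. It does not mean that $q$ is ``small'' or incomparable with $p$. So an element $\mu$ with $\mu>1$ (meaning $\mu\geqslant 1$ and $\mu\neq 1$) automatically satisfies $\mu\nleqslant 1$: if we also had $\mu\leqslant 1$, antisymmetry (Observation~\ref{obsord}(3)) would force $\mu=1$. Your step (b) therefore succeeds rather than fails: with $m_r=\sum_j\mathbf{A}_r^j$ for a non-stochastic row $r$, each entry of $\mathbf{A}$ is $\leqslant 1$ (a summand of a column sum equal to $1$), hence $m_r\leqslant 1+\cdots+1=1$ by Observation~\ref{obsord}(4) and idempotency --- your closing doubt about finite sums of elements $\leqslant 1$ is unfounded, including in $\mathbb{T}$ --- and $m_r\neq 1$ gives $m_r<1$, so $\mu:=m_r^{-1}$ satisfies $\mu\geqslant 1$, $\mu\neq 1$, hence $\mu\nleqslant 1$. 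Then $\Lambda=(1,\ldots,1)+\mu\mathbf{E}_r$ has $\Lambda\mathbf{A}=(1,\ldots,1)$ exactly as you computed in step (a), and $\Lambda\nleqslant(1,\ldots,1)$ because its $r$th coordinate is $1+\mu=\mu\nleqslant 1$. In short: the witness $\Lambda$ is supposed to be \emph{large}, not small, and you talked yourself out of a correct argument.

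One genuine loose end remains in your construction: if the offending row is identically zero, then $m_r=0$ is not invertible and $\mu=m_r^{-1}$ is undefined. In that case take $\mu=\lambda$ from Observation~\ref{obsord3}; the zero row contributes nothing to $\Lambda\mathbf{A}$, so the identity $\Lambda\mathbf{A}=(1,\ldots,1)\mathbf{A}=(1,\ldots,1)$ is immediate. The paper handles exactly this degenerate case separately, and otherwise sets $\Lambda^i=\alpha_i^{-1}$ for \emph{every} row simultaneously (where $\alpha_i$ is the $i$th row sum), proving $\Lambda\mathbf{A}\geqslant(1,\ldots,1)$ by monotonicity and $\Lambda\mathbf{A}\leqslant(1,\ldots,1)$ by summing $\Lambda^i\alpha_i=1$ over $i$; your single-row modification is an equally valid variant once the order confusion and the zero-row case are fixed.
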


\begin{proof}
We have $$\sum\limits_{i=1}^d\sum\limits_{j=1}^n \mathbf{A}_i^j=1+\ldots+1=1,$$
so that $\alpha_i:=\sum_{j=1}^n \mathbf{A}_i^j\leqslant 1$. If $\alpha_i=0$ for some $i$, then the $i$th row of $\mathbf{A}$ consists of zeros by Observation~\ref{obsord2}; in this case, we define $\Lambda$ as the vector whose coordinates are ones except the $i$th coordinate which is equal to the element $\lambda$ as in Observation~\ref{obsord3}. We have $\Lambda\nleqslant(1,\ldots,1)$ and $\Lambda \mathbf{A}=(1,\ldots,1)\mathbf{A}$; since $\mathbf{A}$ is column-stochastic, we get $(1,\ldots,1)\mathbf{A}=(1,\ldots,1)$ and complete the proof in our special case.

Now we assume that all of the $\alpha_i$'s are non-zero, and we define $\Lambda\in S^{1\times d}$ with $\Lambda^i$ being the inverse of $\alpha_i$. As said above, $\alpha_i\leqslant1$, so that $\Lambda^i\geqslant1$ by the item (4) of Observation~\ref{obsord}. Also, the assumption of the theorem states that $\mathbf{A}$ is not row-stochastic, which implies $\Lambda>(1,\ldots,1)$. Since $\mathbf{A}$ is column-stochastic, we have $(1,\ldots,1)\mathbf{A}=(1,\ldots,1)$, and using the item (4) of Observation~\ref{obsord}, we get
\begin{equation}\label{eqstoc1}\Lambda \mathbf{A}\geqslant (1,\ldots,1)\mathbf{A}=(1,\ldots,1).\end{equation}
Further, we get
$$\sum\limits_{j=1}^n\sum\limits_{i=1}^d \Lambda^i\mathbf{A}_i^j=\sum\limits_{i=1}^d \Lambda^i\alpha_i=1+\ldots+1=1,$$
which shows that
\begin{equation}\label{eqstoc2}\sum\limits_{i=1}^d \Lambda^i\mathbf{A}_i^j\leqslant 1\end{equation}
for all $j$. Putting the inequalities~\eqref{eqstoc1} and~\eqref{eqstoc2} together and using the item (3) of Observation~\ref{obsord}, we get $\Lambda \mathbf{A}=(1,\ldots,1)$. 
\end{proof}

\begin{lem}\label{lemidem2}
Let $S$ be an idempotent semifield containing at least three elements. Let $\mathbf{A}\in S^{d\times n}$ be a column-stochastic matrix and $\mathbf{b}\in\{0,1\}^{d\times 1}$ be a vector outside $\ri\mathbf{A}$. Then there are vectors $\mathbf{u},\mathbf{v}$ such that $\mathbf{u}\mathbf{A}=\mathbf{v}\mathbf{A}$ and $\mathbf{u}\mathbf{b}\neq \mathbf{v}\mathbf{b}$.
\end{lem}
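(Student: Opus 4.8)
The plan is to restrict $\mathbf{A}$ to a submatrix to which Lemma~\ref{lemidem1} applies, extract a witness there, and then lift it back to the full matrix. Write $I=\{i:\mathbf{b}_i=1\}$ and $Z=\{i:\mathbf{b}_i=0\}$; since $\mathbf{0}\in\ri\mathbf{A}$ but $\mathbf{b}\notin\ri\mathbf{A}$, the set $I$ is nonempty. Call a column index $j$ \emph{forced} if $\mathbf{A}_i^j\neq 0$ for some $i\in Z$, let $J_0$ be the set of forced indices and $J_1$ its complement. I would first record two consequences of this partition: (a) if $\mathbf{A}\mathbf{w}=\mathbf{b}$, then reading the coordinates of index in $Z$ and using zero-sum-freeness (Observation~\ref{obsord2}) forces $\mathbf{w}_j=0$ for every $j\in J_0$; and (b) $\mathbf{A}_i^j=0$ whenever $i\in Z$ and $j\in J_1$. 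Let $\mathbf{B}$ be the submatrix of $\mathbf{A}$ with rows indexed by $I$ and columns indexed by $J_1$. By (b) the column sums of $\mathbf{B}$ equal the corresponding column sums of $\mathbf{A}$, so $\mathbf{B}$ is column-stochastic; and $(1,\ldots,1)^{T}\notin\ri\mathbf{B}$, for otherwise a solution $\mathbf{B}\mathbf{w}'=(1,\ldots,1)^{T}$ padded by zeros on the coordinates of $J_0$ would, by (b), give $\mathbf{A}\mathbf{w}=\mathbf{b}$.

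Next I would produce $\mathbf{p},\mathbf{q}\in S^{1\times|I|}$ with $\mathbf{p}\mathbf{B}=\mathbf{q}\mathbf{B}$ but $\sum_{i\in I}\mathbf{p}_i\neq\sum_{i\in I}\mathbf{q}_i$. If $J_1\neq\emptyset$, then $\mathbf{B}$ is not row-stochastic (a row-stochastic $\mathbf{B}$ would satisfy $\mathbf{B}(1,\ldots,1)^{T}=(1,\ldots,1)^{T}$, contradicting the previous paragraph), so Lemma~\ref{lemidem1} yields $\Lambda\nleqslant(1,\ldots,1)$ with $\Lambda\mathbf{B}=(1,\ldots,1)$; I would take $\mathbf{p}=\Lambda$ and $\mathbf{q}=(1,\ldots,1)$, noting $(1,\ldots,1)\mathbf{B}=(1,\ldots,1)$ since $\mathbf{B}$ is column-stochastic, while a coordinate $\Lambda_{i_0}\nleqslant 1$ forces $\sum_{i\in I}\Lambda_i\neq 1$ (if the sum were $1$, then $1\geqslant\Lambda_{i_0}$ by Observation~\ref{obsord}, a contradiction). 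If $J_1=\emptyset$, then $\mathbf{p}\mathbf{B}=\mathbf{q}\mathbf{B}$ holds vacuously and I would take $\mathbf{p}=(1,\ldots,1)$ and $\mathbf{q}$ equal to $(1,\ldots,1)$ with one entry replaced by an element $\lambda\nleqslant 1$ furnished by Observation~\ref{obsord3}.

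Finally I would set $\mathbf{u}_i=\mathbf{p}_i$, $\mathbf{v}_i=\mathbf{q}_i$ for $i\in I$ and $\mathbf{u}_i=\mathbf{v}_i=t_i$ for $i\in Z$, where after $\mathbf{p},\mathbf{q}$ are fixed the $t_i$ are chosen so that for every $j\in J_0$ the term $\sum_{i\in Z}t_i\mathbf{A}_i^j$ dominates $\sum_{i\in I}(\mathbf{p}_i+\mathbf{q}_i)\mathbf{A}_i^j$ with respect to $\geqslant$. This is possible: each $j\in J_0$ has a nonzero entry $\mathbf{A}_{i(j)}^j$ with $i(j)\in Z$, so it suffices to take $t_{i'}=1+\sum\bigl\{\bigl(\sum_{i\in I}(\mathbf{p}_i+\mathbf{q}_i)\mathbf{A}_i^j\bigr)(\mathbf{A}_{i'}^j)^{-1}:j\in J_0,\ i(j)=i'\bigr\}$. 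Then for $j\in J_1$ the rows of index in $Z$ vanish, so $(\mathbf{u}\mathbf{A})_j=(\mathbf{p}\mathbf{B})_j=(\mathbf{q}\mathbf{B})_j=(\mathbf{v}\mathbf{A})_j$; for $j\in J_0$ both $(\mathbf{u}\mathbf{A})_j$ and $(\mathbf{v}\mathbf{A})_j$ collapse to $\sum_{i\in Z}t_i\mathbf{A}_i^j$ by the domination and Observation~\ref{obsord}. Hence $\mathbf{u}\mathbf{A}=\mathbf{v}\mathbf{A}$, while $\mathbf{u}\mathbf{b}=\sum_{i\in I}\mathbf{p}_i\neq\sum_{i\in I}\mathbf{q}_i=\mathbf{v}\mathbf{b}$.

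The main obstacle is precisely this last lifting step: an idempotent semifield need not have a greatest element, so one cannot swamp the forced columns with a single universal constant. The remedy is that, although there may be no global top, the sum of any finite list of elements dominates every element of the list; dividing by the invertible nonzero entries $\mathbf{A}_{i(j)}^j$ turns ``$\sum_{i\in Z}t_i\mathbf{A}_i^j$ dominates $X$'' into a finite family of lower-bound requirements on the $t_i$, which is therefore satisfiable. The remaining steps are bookkeeping around the partitions $I\sqcup Z$ and $J_0\sqcup J_1$ together with Observation~\ref{obsord}.
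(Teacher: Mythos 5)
Your proof is correct and follows essentially the same route as the paper's: partition the rows by the zero/one pattern of $\mathbf{b}$ and the columns by whether they meet a nonzero entry in a zero-row of $\mathbf{b}$ (your $\mathbf{B}$ is the paper's block $Q$), apply Lemma~\ref{lemidem1} to that block, and then absorb the remaining columns by choosing sufficiently large entries on the zero-rows, exactly the role of the paper's constants $p$ and $r$ (the only cosmetic difference being your per-row constants $t_{i'}$ versus the paper's single constant $pr$).
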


\begin{proof}
Assume $\mathbf{b}$ contains $k$ ones. We can assume without loss of generality that the first $k$ coordinates of $\mathbf{b}$ are ones, and we write 
$$\mathbf{A}=
\left(\begin{array}{c|c}
P_{k\times (n-m)}&Q_{k\times m}\\\hline
R_{(d-k)\times (n-m)}&O_{(d-k)\times m}
\end{array}\right),\,\,\,\,
\mathbf{b}=
\begin{pmatrix}
J_{k\times 1}\\\hline
O_{(d-k)\times 1}
\end{pmatrix},$$
where $J$ is a $k\times 1$ vectors of ones, the $O$'s are zero matrices of relevant sizes, and $R$ has no zero column. 
Note that $k\neq0$ and $Q$ is not a row-stochastic matrix because otherwise we would have $\mathbf{b}\in\ri\mathbf{A}$. However, the equalities $k=d$, $m=0$, $m=n$ are possible, and they correspond to some blocks of the above matrices being empty.

By Lemma~\ref{lemidem1}, there is a vector $\Lambda\nleqslant(1,\ldots,1)\in S^{1\times k}$ such that $\Lambda Q=(1,\ldots,1)$. (If $k=d$ or $m=n$, this completes the proof immediately. If $m=0$, that is, if $Q$ is empty, then we choose a vector $\Lambda\nleqslant(1,\ldots,1)$ arbitrarily.)
Let $p\in S$ be the sum of all entries of the matrices $P$ and $\Lambda P$ plus one; let $r$ equal one plus the sum of the inverses of all non-zero entries of $R$. We set $M=(pr,\ldots,pr)\in S^{1\times(d-k)}$, and we get $MR\geqslant (p,\ldots,p)\in S^{1\times(m-n)}$. This means that $MR$ is greater than or equal to any row of $P$ and $\Lambda P$, so we get
$$(1,\ldots,1|M)\mathbf{A}=(MR|1,\ldots,1)=(\Lambda|M)\mathbf{A},$$
$$(1,\ldots,1|M)\mathbf{b}=1\nleqslant\Lambda^1+\ldots+\Lambda^k=(\Lambda|M)\mathbf{b},$$
which completes the proof.
\end{proof}

\begin{thr}\label{thrthr2}
Every idempotent semifield is left exact.
\end{thr}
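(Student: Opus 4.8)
The plan is to reduce Theorem~\ref{thrthr2} to the combinatorial statement proved in Lemma~\ref{lemidem2}, using the chain of reductions already assembled in Sections~2 and~3. First I would dispose of the degenerate case in which $S$ has fewer than three elements: an idempotent semifield with $0\neq 1$ and no other elements is the binary Boolean algebra, whose exactness can be checked directly (or cited), so that case is harmless and we may assume $|S|\geqslant 3$ for the rest of the argument. With that in hand, the strategy is: to prove (E1), it suffices by Theorem~\ref{lemlem1} to prove (E2); since an idempotent semifield is zero-sum free by Observation~\ref{obsord2}, Corollary~\ref{obsobs2} tells us (E2) is equivalent to (E2'); and (E2') is \emph{exactly} the contrapositive of Lemma~\ref{lemidem2}.

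Concretely, I would argue the contrapositive of (E2'). Suppose $\mathbf{A}\in S^{d\times n}$ is column-stochastic, $\mathbf{b}\in\{0,1\}^{d\times 1}$, and $\mathbf{b}\notin\ri\mathbf{A}$. Lemma~\ref{lemidem2} produces vectors $\mathbf{u},\mathbf{v}\in S^{1\times d}$ with $\mathbf{u}\mathbf{A}=\mathbf{v}\mathbf{A}$ but $\mathbf{u}\mathbf{b}\neq\mathbf{v}\mathbf{b}$, i.e.\ $(\mathbf{u},\mathbf{v})\in\lk\mathbf{A}\setminus\lk\mathbf{b}$, which is precisely the statement that $\lk\mathbf{A}\nsubseteq\lk\mathbf{b}$. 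Hence whenever the hypothesis $\lk\mathbf{A}\subseteq\lk\mathbf{b}$ of (E2') holds we must have $\mathbf{b}\in\ri\mathbf{A}$, so (E2') holds. Feeding this back through Corollary~\ref{obsobs2} gives (E2) and then Theorem~\ref{lemlem1} gives (E1), namely that $S$ is left exact.

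There is really no hard analytic step left here — the substantive work has been front-loaded into Lemmas~\ref{lemidem1} and~\ref{lemidem2} and into the reduction Corollary~\ref{obsobs2}. The only point that deserves care is the bookkeeping around the small case $|S|<3$: one should note that the hypotheses of Lemmas~\ref{lemidem1} and~\ref{lemidem2} explicitly require at least three elements, so the reduction via (E2') cannot be applied verbatim when $S$ is the two-element Boolean semifield, and that case must be handled separately (it is classical that the Boolean semiring is exact). Assuming that is acknowledged, the proof is a three-line chaining of the results already established: Lemma~\ref{lemidem2} $\Rightarrow$ (E2') $\Rightarrow$ (E2) via Corollary~\ref{obsobs2} $\Rightarrow$ (E1) via Theorem~\ref{lemlem1}. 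Combined with Corollary~\ref{corcor2} (the `only if' direction) and the fact that division rings are left exact, this completes the proof of Theorem~\ref{thrthr}.
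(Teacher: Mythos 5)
Your proposal is correct and follows exactly the same route as the paper's own proof: Lemma~\ref{lemidem2} gives (E2') for $|S|\geqslant 3$, Corollary~\ref{obsobs2} upgrades this to (E2), Theorem~\ref{lemlem1} yields left exactness, and the two-element Boolean semifield is handled separately. Nothing is missing.
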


\begin{proof}
Lemma~\ref{lemidem2} shows that an idempotent semifield $S$ possesses the property (E2') as in Corollary~\ref{obsobs2} whenever $S$ has at least three elements. Therefore, we can apply Theorem~\ref{lemlem1} to see that every such $S$ is left exact.

If $S$ contains $0$ and $1$ only, then the definitions allow us to identify $S$ uniquely as the binary Boolean semiring $\mathbb{B}$. The exactness of $\mathbb{B}$ can be proved by a routine application of Theorem~\ref{lemlem1}. Alternatively, one can get this result by applying a deeper one, Theorem~6.5 in~\cite{WJK}.
\end{proof}

The proof of Theorem~\ref{thrthr} is now complete. In fact, Corollary~\ref{corcor2} proves the 'only if' direction, and the 'if' direction follows from Theorem~\ref{thrthr2} and basic results of linear algebra which imply that any division ring is exact.

\section{A discussion and further work}\label{secfw}

We gave the complete characterization of semifields that are exact, but we do not know how to generalize this result to the case of arbitrary semirings. The arXiv version of this paper (\cite{myexac}) contained several questions regarding such characterizations, but later it turned out that several of these questions have already been answered. In particular, as Tran Giang Nam pointed out to the author, the solution to Problem~5.1 in~\cite{myexac} is negative. Namely, Example~4.16 and Theorem~4.18 in~\cite{AIK} give an example of an exact semiring that cannot be represented as a direct sum of a ring and an idempotent semiring. 
Nam also noticed that the semiring $B_3$ as in~Example 3.7 of~\cite{AIK} is selective and exact but is not a semifield, which solves Problem~5.2 in~\cite{myexac}.

Several interesting examples of idempotent semirings were examined in~\cite{Wil} by Wilding. Given a monoid $M$, he defines the semiring $\mathbb{B}M=(2^M, \cup, \cdot)$, where two subsets $A,B\subseteq M$ are multiplied as $AB=\{ab|a\in A, b\in B\}$. Wilding proves that $\mathbb{B}M$ is exact if $M$ is a group, and asks if the converse of this statement is true. Our approach is not sufficient to answer this question, and we believe that its resolution would lead to a significant step towards the classification of exactness in the idempotent case.

\section{Acknowledgements}

I would like to thank the anonymous reviewer for helpful comments and for pointing my attention to the papers~\cite{CSQ, LMS}. I am grateful to Grigory Garkusha for explaining to me why the exactness of a ring is equivalent to FP-injectivity and for pointing out that the result in~\cite{Shi} is essentially Theorem~3.2 in~\cite{Gar}. As pointed out in Section~\ref{secfw}, the first arXiv draft of this paper (see~\cite{myexac}) contained many questions that turned out to have been solved earlier by Tran Giang Nam and his colleagues in different papers on this topic. I would like to thank Nam for his comments on the corresponding results and for pointing my attention to relevant references.


\end{document}